\newtheorem{thm}{Theorem}[section]
\newtheorem{cor}[thm]{Corollary}
\theoremstyle{definition}
\theoremstyle{remark}
\numberwithin{equation}{section}
\begin{document}

\title[]{On functional inequalities associated with Drygas functional equation}%
\author{Manar Youssef and  Elqorachi Elhoucien}%
\address{Manar Youssef, University
Ibn Zohr, Superior School of Technology, Guelmim, Morocco}%
\email{manaryoussef1984@gmail.com}%
\address{Elqorachi Elhoucien, University
Ibn Zohr, Department of Mathematics, Faculty of Sciences, Agadir,
Morocco} \email{elqorachi@hotmail.com}
%\thanks{}%
%\subjclass{}%
%\keywords{}%

%\date{}%
%\dedicatory{}%
%\commby{}%
% ----------------------------------------------------------------
\begin{abstract}
In the paper, the equivalence of the functional inequality
$$\|2f(x)+f(y)+f(-y)-f(x-y)\|\leq\|f(x+y)\|\;\;\;(x,y\in{G})$$
and the Drygas functional equation
$$f(x+y)+f(x-y)=2f(x)+f(y)+f(-y)\;\;\;(x,y\in{G})$$
is proved for functions $f:G\rightarrow E$ where $(G, +)$ is an
abelian group, $(E, <\cdot, \cdot>)$ is an inner product space, and
the norm is derived from the inner product in the usual way.
\end{abstract}
\maketitle
\section{Introduction}
Throughout the paper, $(G, +)$ will denote an abelian group and $(E,
<\cdot, \cdot>)$ an inner product space over $\mathbb{K}$
($\mathbb{R}$ or $\mathbb{C}$) with inner product $<\cdot, \cdot>$
and associated norm $\|\cdot\|$.\\
Gy. Maksa and P. Volkman proved in \cite{10b} the following
\begin{thm}\label{thm01} Let $G$ be a group, $E$ be an inner product
space. If $f:G\rightarrow E$ be a function such that
$$\|f(x)+f(y)\|\leq\|f(x y)\|$$
for all $x,y\in G$. Then $f$ satisfies
$$ f(x y)=f(x)+f(y)$$
for all $x,y\in G$.
\end{thm}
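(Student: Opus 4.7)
The plan is to first specialize the hypothesis to determine $f(e)$ and $f(x^{-1})$, then produce two auxiliary inequalities by substitutions in the hypothesis, and finally use the inner product structure to squeeze the defect $\|f(xy) - f(x) - f(y)\|^{2}$ down to zero.

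To begin, set $x = y = e$ in the hypothesis: this yields $\|2 f(e)\| \leq \|f(e)\|$, which forces $f(e) = 0$. Then setting $y = x^{-1}$ gives $\|f(x) + f(x^{-1})\| \leq \|f(e)\| = 0$, so $f(x^{-1}) = -f(x)$ for every $x \in G$. Next I would apply the hypothesis to the pair $(xy,\, y^{-1})$ and use $f(y^{-1}) = -f(y)$ to deduce
\[ \|f(xy) - f(y)\| \leq \|f(x)\|, \]
and symmetrically to the pair $(x^{-1},\, xy)$ to deduce
\[ \|f(xy) - f(x)\| \leq \|f(y)\|. \]
Both substitutions go through in an arbitrary (not necessarily abelian) group, since the products $(xy)\cdot y^{-1} = x$ and $x^{-1}\cdot(xy) = y$ hold universally.

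To finish, I would square these two auxiliary inequalities and add them; the terms $\|f(x)\|^{2} + \|f(y)\|^{2}$ cancel on both sides, leaving
\[ \mathrm{Re}\,\langle f(xy),\, f(x) + f(y)\rangle \geq \|f(xy)\|^{2}. \]
Substituting this bound, together with the original hypothesis $\|f(x) + f(y)\|^{2} \leq \|f(xy)\|^{2}$, into the expansion
\[ \|f(xy) - f(x) - f(y)\|^{2} = \|f(xy)\|^{2} - 2\,\mathrm{Re}\,\langle f(xy),\, f(x) + f(y)\rangle + \|f(x) + f(y)\|^{2} \]
yields a non-positive upper bound, and therefore $f(xy) = f(x) + f(y)$. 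The main point requiring care is to verify that the two auxiliary inequalities really remain valid in a non-abelian setting; once that is confirmed, the rest is a mechanical inner-product computation.
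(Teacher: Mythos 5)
Your argument is correct: every substitution ($x=y=e$; $y=x^{-1}$; the pairs $(xy,y^{-1})$ and $(x^{-1},xy)$) is valid in an arbitrary group, and the final inner-product computation does force $\|f(xy)-f(x)-f(y)\|^{2}\leq 0$. Note that the paper itself states this theorem only as a citation of Maksa and Volkmann \cite{10b} without reproducing a proof; your argument is essentially the standard one from that reference, so there is nothing to correct.
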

 In \cite{5a}, A. Gil\'{a}nyi showed that if  $G$ is a $2$-divisible abelian group,
then the functional inequality
\begin{equation}
\|2f(x)+2f(y)-f(x-y)\|\leq \|f(x+y)\|\;\;\text{for all}\;x,y\in G
\end{equation}
 implies
\begin{equation}\label{eqx}
f(x+y)+f(x-y)=2f(x)+2f(y)\;\;\text{for all}\;x,y\in G,
\end{equation}
and the  commutativity of $G$ may be replaced by the Kannappan
condition\\
 $f(xyz)=f(xzy)$, $x,y,z\in G$.\\
 In \cite{ratz} J. R\"{a}tz deleted the 2-divisibility of $G$, weakened the Kannappan condition  and
discussed variants of Gil\'{a}nyi result. \\
In \cite{Springer}, E. Elqorachi et al. proved that if the function
$f:G\rightarrow E$ from $G$ (an abelian 2-divisible group) to an
inner product space $E$, satisfies the inequality
$$\|2f(x)+2f(y)-f(x+\sigma(y))\|\leq\|f(x+y)\|,
 \;\;x,y\in{G},$$
  where $\sigma:G\rightarrow G$ is an involution (i.e., $\sigma(x+y)=\sigma(x)+\sigma(y)$,
  $\sigma\circ\sigma(x)=\sigma(x)$ for all $x,y\in{G}$),
  then $f$ satisfies the $\sigma$-quadratic functional equation
 $$f(x+ y)+f(x+\sigma(y))=2f(x)+2f(y),\;x,y\in{G}.$$
The above-described effect: inequality implies equality was proved
for some other functional equations. The interested reader can refer
to \cite{An}, \cite{Cho}, \cite{Chung}, \cite{Ebadian},
 \cite{Kim1}, \cite{Kim2}, \cite{Kwon}, \cite{Park1}, \cite{Park2}, \cite{C. Park1}, \cite{C. Park2},
  \cite{C. Park3}, \cite{Roh} and \cite{Volk} for a through account on the subject of functional inequalities. \\
We say that the function $f:G\rightarrow E$ satisfies the Drygas
functional equation, if
\begin{equation}\label{eqDrygas1}
f(x+y)+f(x-y)=2f(x)+f(y)+f(-y)
\end{equation}
for all $x,y\in{G}$.\\
 The equation was introduced in \cite{Dryg},
where the author was looking for characterizations of quasi inner
product spaces, which in turn led to solutions of some problems in
statistics and
mathematical programming.\\
The functional equation (\ref{eqDrygas1}) has been studied by Gy.
Szabo \cite{Szabo}, B. R. Ebanks et al. \cite{Ebanks}, V. A.
Fa\u{i}ziev and P. K. Sahoo \cite{Fa1}. The solutions of equation
(\ref{eqDrygas1}) in abelian group are obtained by H. Stetk{\ae}r
in \cite{St}.\\\\
The purpose of our paper is to show that if $f:G\rightarrow E$
satisfies the Drygas inequality
$$\|2f(x)+f(y)+f(-y)-f(x-y)\|\leq \|f(x+y)\|\;\; \text{for all}\;x,y\in G,$$
 then $f$ satisfies the Drygas functional equation (\ref{eqDrygas1}).\\
 Throughout this paper, $f^{o}$ and $f^{e}$ denote the
odd and even parts of $f$, respectively, i.e.,
$f^{o}(x)=\displaystyle\frac{f(x)-f(-x)}{2}$,
$f^{e}(x)=\displaystyle\frac{f(x)+f(-x)}{2}$ for all $x\in{G}$.
\section{Main result}
\begin{thm}\label{thm21}
Let $G$ be an abelian group, $E$ be an inner product space and
 $f:G\rightarrow E$ be a mapping such that
\begin{equation}\label{eq21}
\|2f(x)+f(y)+f(-y)-f(x-y))\|\leq \|f(x+y)\|
\end{equation}
for all $x,y\in{G}$. Then $f$ is a solution of the Drygas functional
equation
\begin{equation}\label{eqD}
f(x+y)+f(x-y)=2f(x)+f(y)+f(-y),\;\;\;\;x,y\in{G}.
\end{equation}
\end{thm}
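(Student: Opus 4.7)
The plan is to decompose $f=f^{o}+f^{e}$ into its odd and even parts, reduce the hypothesis to separate inequalities for each part, and then apply Theorem \ref{thm01} of Maksa--Volkman together with Gil\'anyi's theorem. As a preliminary, substituting $x=y=0$ in (\ref{eq21}) yields $\|2f(0)\|\le\|f(0)\|$, whence $f(0)=0$. Setting $y=-x$ then gives $\|3f(x)+f(-x)-f(2x)\|\le 0$, so $f(2x)=3f(x)+f(-x)$; splitting by parity produces $f^{o}(2x)=2f^{o}(x)$ and $f^{e}(2x)=4f^{e}(x)$.

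The key technical step exploits the $y\mapsto-y$ symmetry of $2f(x)+f(y)+f(-y)$ together with the $x\mapsto-x$ substitution. Writing $u=f(x+y)$, $v=f(x-y)$ and $w=2f(x)+f(y)+f(-y)$, the hypothesis and its $y$-flip read $\|w-v\|\le\|u\|$ and $\|w-u\|\le\|v\|$; squaring, adding, and applying the parallelogram identity give $\|2w-(u+v)\|\le\|u+v\|$. Repeating the averaging after also substituting $x\mapsto-x$, and inserting $f=f^{o}+f^{e}$, produces the coupled estimate
\begin{equation*}
\|2f^{e}(x)+2f^{e}(y)-f^{e}(x-y)\|^{2}+\|2f^{o}(x)-f^{o}(x-y)\|^{2}\le\|f^{e}(x+y)\|^{2}+\|f^{o}(x+y)\|^{2}.
\end{equation*}

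The main obstacle is to split this coupled estimate into the two standalone inequalities $\|2f^{e}(x)+2f^{e}(y)-f^{e}(x-y)\|\le\|f^{e}(x+y)\|$ (Gil\'anyi's inequality for $f^{e}$) and $\|f^{o}(x)+f^{o}(y)\|\le\|f^{o}(x+y)\|$ (the Cauchy-type inequality for $f^{o}$). My proposed route is to combine the displayed estimate with the one obtained by swapping $x\leftrightarrow y$ (using $f^{o}(y-x)=-f^{o}(x-y)$ and $f^{e}(y-x)=f^{e}(x-y)$); a further application of the parallelogram identity then replaces $\|2f^{o}(x)-f^{o}(x-y)\|^{2}$ by $\|f^{o}(x)+f^{o}(y)\|^{2}+\|f^{o}(x)-f^{o}(y)-f^{o}(x-y)\|^{2}$, after which the substitution $y\mapsto-y$ and careful bookkeeping of the remaining non-negative mixed terms should isolate the two intended inequalities.

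Once the two individual inequalities are secured, Theorem \ref{thm01} forces $f^{o}$ to be additive, equivalently $f^{o}(x+y)+f^{o}(x-y)=2f^{o}(x)$ (as $f^{o}$ is odd), while Gil\'anyi's theorem gives $f^{e}(x+y)+f^{e}(x-y)=2f^{e}(x)+2f^{e}(y)$. Adding these two identities and using $f(y)+f(-y)=2f^{e}(y)$ recovers the Drygas equation (\ref{eqD}).
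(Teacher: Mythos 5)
Your reduction to the coupled estimate is correct, and deriving it via squaring and the parallelogram law is in fact a cleaner route than the paper's (which reaches an analogous point using the triangle inequality). But the decisive step --- splitting the coupled estimate into the two standalone inequalities for $f^{e}$ and $f^{o}$ --- is exactly where your argument stops being a proof. An inequality $A_{1}+A_{2}\le B_{1}+B_{2}$ with all terms non-negative does not split into $A_{1}\le B_{1}$ and $A_{2}\le B_{2}$, and the manipulations you sketch (swapping $x\leftrightarrow y$, one more parallelogram identity, then $y\mapsto -y$) do not achieve the split: after discarding the non-negative term $\|f^{o}(x)-f^{o}(y)-f^{o}(x-y)\|^{2}$ you are still left with
\begin{equation*}
\|2f^{e}(x)+2f^{e}(y)-f^{e}(x-y)\|^{2}+\|f^{o}(x)+f^{o}(y)\|^{2}\le\|f^{e}(x+y)\|^{2}+\|f^{o}(x+y)\|^{2},
\end{equation*}
in which an excess on the even side could a priori be compensated by a deficit on the odd side; the further $y\mapsto-y$ substitution merely interchanges the roles of $x+y$ and $x-y$ and produces no cancellation that isolates either summand. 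The word ``should'' in your write-up marks a genuine gap, and it sits at the heart of the theorem.

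The gap can be closed with ingredients you already have. For the even part, use the scaling relations $f^{e}(2^{n}x)=4^{n}f^{e}(x)$ and $f^{o}(2^{n}x)=2^{n}f^{o}(x)$ that you derived but never exploited: replacing $(x,y)$ by $(2^{n}x,2^{n}y)$ in the coupled estimate and dividing by $16^{n}$ turns the odd-part terms into $4^{-n}\|f^{o}(\cdot)\|^{2}$, which vanish as $n\to\infty$, leaving Gil\'{a}nyi's inequality $\|2f^{e}(x)+2f^{e}(y)-f^{e}(x-y)\|\le\|f^{e}(x+y)\|$; this limit device is precisely how the paper isolates the even part. Once $f^{e}$ is known to be quadratic, $2f^{e}(x)+2f^{e}(y)-f^{e}(x-y)=f^{e}(x+y)$, and your displayed coupled estimate then collapses at once to $\|f^{o}(x)+f^{o}(y)\|\le\|f^{o}(x+y)\|$ --- which is actually slicker than the paper's treatment of the odd part (there one substitutes the quadratic identity back into the original inequality, symmetrizes in $x$ and $y$, expands the squared norms, and kills the cross terms by a sign flip). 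With these two repairs your final assembly of the Drygas equation is correct; also, the computation giving $f(0)=0$ should read $\|3f(0)\|\le\|f(0)\|$ rather than $\|2f(0)\|\le\|f(0)\|$, though the conclusion is unaffected.
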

\begin{proof}In the proof we use Gy. Maksa and Volkmann's \cite{10b}
and Gil\'{a}nyi \cite{5a} results to prove that $f^{e}$ is a
solution of the quadratic functional equation (\ref{eqx}) and
$f^{o}$ is a solution of the Cauchy functional equation.\\
Writing $x=y=0$ in (\ref{eq21}), we  obtain $3\|f(0)\|\leq \|f(0)\|$, so $f(0)=0$.\\
Replacing $y$ by $-x$ in (\ref{eq21}), we get
\begin{equation}\label{eq23}
2f(x)+2f^{e}(x)=f(2x).
\end{equation}
By using $f=f^{e}+f^{o}$ and (\ref{eq23}), we have
\begin{equation}\label{eq24}
4f^{e}(x)+2f^{o}(x)=f^{e}(2x)+f^{o}(2x).
\end{equation}
If we replace $x$ by $-x$ in (\ref{eq24}), we get
\begin{equation}\label{eq25}
4f^{e}(x)-2f^{o}(x)=f^{e}(2x)-f^{o}(2x).
\end{equation}
By adding and subtracting (\ref{eq24}) to (\ref{eq25}), we obtain
respectively,
\begin{equation}\label{eq26}
f^{e}(2x)=4f^{e}(x)
\end{equation}
and
\begin{equation}\label{eq27}
f^{o}(2x)=2f^{o}(x).
\end{equation}
By using (\ref{eq26}) and (\ref{eq27}), we can easy to check by
induction that
\begin{equation}\label{eq28}
f^{e}(x)=4^{-n}f^{e}(2^{n}x)\;\;\text{and}\;\;
 f^{o}(x)=2^{-n}f^{o}(2^{n}x)
 \end{equation}
for all $n\in \mathbb{N}$ and $x\in G$.\\
Substituting $x$ by $-x$ and $y$ by $-y$  in (\ref{eq21}), we have
\begin{equation}\label{eq29}
\|2f(-x)+2f^{e}(y)-f(-x+y))\|\leq \|f(-x-y)\|,\;\;x,y\in{G}.
\end{equation}
 By adding (\ref{eq29}) to (\ref{eq21}) and using the triangle
 inequality, we obtain
\begin{equation}\label{eq210}
\|4f^{e}(x)+4f^{e}(y)-2f^{e}(x-y)\|\leq \|f(x+y)\|+\|f(-x-y)\|.
\end{equation}
Writing $2^{n}x$ instead of $x$ and $2^{n}y$ instead of $y$ in
(\ref{eq210}), we get
\begin{align}\label{eq211}
\nonumber \|4f^{e}(2^{n}x)+4f^{e}(2^{n}y)-2f^{e}(2^{n}(x-y))\|\leq&
\|f^{e}(2^{n}(x+y))+f^{o}(2^{n}(x+y))\|\\
 &+\|f^{e}(2^{n}(x+y))+f^{o}(2^{n}(-x-y))\|.
\end{align}
By using the induction assumption (\ref{eq28}) and  dividing the new
inequality by $4^{n}$, we have
\begin{align}\label{eq212}
\nonumber \|4f^{e}(x)+4f^{e}(y)-2f^{e}(x-y)\|\leq&
\|f^{e}(x+y)+2^{-n}f^{o}(x-y)\|\\
&+\|f^{e}(x+y)+2^{-n}f^{o}(-x-y)\|.
\end{align}
By letting $n\rightarrow+\infty$ in the last inequality, we obtain
\begin{equation}\label{eq213}
\|2f^{e}(x)+2f^{e}(y)-f^{e}(x-y)\|\leq \|f^{e}(x+y)\|,\;\;x,y\in{G}.
\end{equation}
It was proved in \cite{5a,ratz} that this inequality is equivalent
to the quadratic functional equation
\begin{equation}\label{eq214}
f^{e}(x+y)+f^{e}(x-y)=2f^{e}(x)+2f^{e}(y),\;\;x,y\in{G}.
\end{equation}
Which proves the first part of our statement.
 From (\ref{eq21}) we have
\begin{equation}\label{eq215}
\|2f^{e}(x)+2f^{o}(x)+ 2f^{e}(y)-f^{e}(x-y)-f^{o}(x-y)\|\leq
\|f(x+y)\|.
\end{equation}
Since $f^{e}$ satisfies the quadratic functional equation
(\ref{eq214}), we get
\begin{equation}\label{eq216}
\|f^{e}(x+y)+2f^{o}(x)-f^{o}(x-y)\|\leq \|f(x+y)\|.
\end{equation}
Interchanging the roles of $x$ and $y$ in (\ref{eq216}) we obtain
\begin{equation}\label{eq217}
\|f^{e}(y+x)+2f^{o}(y)-f^{o}(y-x)\|\leq \|f(y+x)\|.
\end{equation}
Adding this inequality  to (\ref{eq216}), we get
\begin{equation}\label{eq218}
\|f^{e}(x+y)+f^{o}(x)+f^{o}(y)\|\leq \|f^{e}(x+y)+f^{o}(x+y)\|\;\;
\text{for all}\;x,y\in{G}.
\end{equation}
Inequality (\ref{eq218}) can be rewritten as follows
\begin{align*}
 \|f^{o}(x)+f^{o}(y)\|^{2}&+\|f^{e}(x+y)\|^{2}+2\text{Re}\langle
 f^{o}(x)+f^{o}(y),f^{e}(x+y)\rangle\\
&\leq \|f^{e}(x+y)\|^{2}+\|f^{o}(x+y)\|^{2}+2\text{Re}\langle
f^{o}(x+y),f^{e}(x+y)\rangle,\;\;\text{so,}
 \end{align*}
\begin{equation}\label{eq219}
\|f^{o}(x)+f^{o}(y)\|^{2}+2\text{Re}\langle
 f^{o}(x)+f^{o}(y)-f^{o}(x+y),f^{e}(x+y)\rangle\leq
 \|f^{o}(x+y)\|^{2}.
\end{equation}
In (\ref{eq219}), write  $-x$ and $-y$ instead of $x$ and $y$,
respectively  and add the inequality so obtained to (\ref{eq219}) to
obtain
\begin{equation}\label{eq220}
\|f^{o}(x)+f^{o}(y)\|\leq\|f^{o}(x+y)\|,\;\;x,y\in G.
\end{equation}
In view of \cite{10b}, the inequality (\ref{eq220}) is equivalent to
the Cauchy functional equation
\begin{equation}\label{eq221}
f^{o}(x+y)=f^{o}(x)+f^{o}(y),\;\;x,y\in G.
\end{equation}
 Thus, since
$f(x)=f^{e}(x)+f^{o}(x)$, we can easily check that $f$ is a solution
of Drygas functional equation  (\ref{eqD}). This completes the
proof. \phantom{========================}$\spadesuit$
\end{proof}

 The commutativity of $G$ used in
Theorem \ref{thm21} may be replaced by the Kannappan condition:
$f(xyz)=f(yxz)$, for all $x,y,z\in G$.
\begin{cor} If $G$ is group (not necessarily abelian) and $E$ an inner product space. Then the Drygas inequality
$$\|2f(x)+f(y)+f(y^{-1})-f(xy^{-1})\|\leq \|f(xy)\|$$
with $f(xyz)=f(yxz)$ for all $x,y,z\in G$, is equivalent to Drygas
functional equation
$$f(xy)+f(xy^{-1})=2f(x)+f(y)+f(y^{-1})\;\;\;\text{for all}\; x,y\in G.$$
\end{cor}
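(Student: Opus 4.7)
The plan is to mimic the argument of Theorem~\ref{thm21}, replacing additive notation with multiplicative notation and using the Kannappan condition $f(xyz)=f(yxz)$ wherever commutativity of $G$ was invoked in the proof of the abelian case. The key point is that both the Maksa--Volkmann theorem (Theorem~\ref{thm01}) and the R\"atz variant of Gil\'anyi's result cited after it already hold for not-necessarily-abelian groups (with a Kannappan-type hypothesis when needed), so once the analogues of inequalities (\ref{eq213}) and (\ref{eq220}) are obtained for $f^{e}$ and $f^{o}$ one can invoke them directly.

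First I would verify, without using any symmetry of $G$, the opening reductions: substituting $x=y=e$ gives $f(e)=0$; setting $y=x^{-1}$ yields $f(x^{2})=2f(x)+2f^{e}(x)$, hence $f^{e}(x^{2})=4f^{e}(x)$, $f^{o}(x^{2})=2f^{o}(x)$, and by induction $f^{e}(x^{2^{n}})=4^{n}f^{e}(x)$, $f^{o}(x^{2^{n}})=2^{n}f^{o}(x)$. Next, applying the inequality at $(x^{-1},y^{-1})$ and adding it to the original, the crucial simplification $f(xy^{-1})+f(x^{-1}y)=2f^{e}(xy^{-1})$ follows from the Kannappan identity $f(xyz)=f(yxz)$ with $z=e$, which gives $f(ab)=f(ba)$, and hence $f(x^{-1}y)=f(yx^{-1})=f((xy^{-1})^{-1})$. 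The result is the multiplicative analogue of (\ref{eq210}).

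The main obstacle is the iteration: passing to $x^{2^{n}},y^{2^{n}}$ one must identify $f^{e}(x^{2^{n}}y^{-2^{n}})$ with $4^{n}f^{e}(xy^{-1})$, which is problematic because $x^{2^{n}}y^{-2^{n}}\neq(xy^{-1})^{2^{n}}$ in a non-abelian group. To overcome this I would prove a preliminary \emph{permutation lemma}: the Kannappan condition forces $f(x_{1}x_{2}\cdots x_{k})$ to be invariant under every permutation of its factors. The identity $f(abc)=f(acb)$ is obtained by writing $f(xyz)=f(yxz)$, taking the two sides applied to the inverses, and combining with $f(ab)=f(ba)$; then an adjacent transposition $x_{i}\leftrightarrow x_{i+1}$ inside a longer product is realised by grouping the product into three blocks around the transposed pair and applying Kannappan (and $f(abc)=f(acb)$) on either side. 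With this in hand, $x^{2^{n}}y^{-2^{n}}$ and $(xy^{-1})^{2^{n}}$ are equal as multisets of factors, so they have equal $f$-values, and the same applies to $x^{2^{n}}y^{2^{n}}$ versus $(xy)^{2^{n}}$ and to the corresponding inverses. Dividing the iterated inequality by $4^{n}$ and letting $n\to\infty$ therefore yields
\[
\|2f^{e}(x)+2f^{e}(y)-f^{e}(xy^{-1})\|\leq\|f^{e}(xy)\|,
\]
and the R\"atz form of Gil\'anyi's theorem gives the quadratic equation $f^{e}(xy)+f^{e}(xy^{-1})=2f^{e}(x)+2f^{e}(y)$.

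The odd part is then handled as in Theorem~\ref{thm21}: using the quadratic equation for $f^{e}$ together with the permutation identities $f^{e}(yx)=f^{e}(xy)$, $f(yx)=f(xy)$, and $f^{o}(yx^{-1})=-f^{o}(xy^{-1})$, the analogues of (\ref{eq216}) and (\ref{eq217}) combine via the parallelogram law to give $\|f^{e}(xy)+f^{o}(x)+f^{o}(y)\|\leq\|f(xy)\|$. Expanding in the inner product, symmetrising under $(x,y)\mapsto(x^{-1},y^{-1})$ (where again the permutation lemma is needed to rewrite $f^{e}(x^{-1}y^{-1})=f^{e}(xy)$ and $f^{o}(x^{-1}y^{-1})=-f^{o}(xy)$), and adding produces
\[
\|f^{o}(x)+f^{o}(y)\|\leq\|f^{o}(xy)\|.
\]
Theorem~\ref{thm01} (which requires no commutativity) then forces $f^{o}$ to be additive, and combining $f=f^{e}+f^{o}$ gives the Drygas equation $f(xy)+f(xy^{-1})=2f(x)+f(y)+f(y^{-1})$. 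The converse direction is a direct substitution.
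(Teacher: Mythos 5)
The paper states this corollary without proof, merely asserting beforehand that the commutativity used in Theorem~\ref{thm21} may be replaced by the Kannappan condition; your proposal carries out exactly that intended adaptation, and it is correct. The one genuinely new ingredient you supply --- the permutation lemma showing that $f(xyz)=f(yxz)$ forces $f(x_1x_2\cdots x_k)$ to be invariant under all permutations of the factors, which is precisely what justifies the otherwise problematic identification $f(x^{2^n}y^{-2^n})=f((xy^{-1})^{2^n})$ at the iteration step --- is exactly what is needed and is provable as you sketch (from $f(ab)=f(ba)$ one gets $f(abc)=f(cab)=f(acb)$, and three-block regroupings then yield every adjacent transposition in longer products).
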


\end{document}